\theoremstyle{definition}
\newtheorem{thm}{Theorem}[section]
\newtheorem{cor}[thm]{Corollary}
\newtheorem{lem}[thm]{Lemma}
\newtheorem{defn}[thm]{Definition}
\newtheorem{rem}[thm]{Remark}
\numberwithin{equation}{section}
\newcommand{\ddt}[1]{\frac{\mathrm{d}#1}{\mathrm{d}t}}
\newcommand{\pp}[2]{\frac{\partial#1}{\partial#2}}
\title{Combinatorial Calabi flows for ideal circle patterns in spherical background geometry}
\author{
        Ziping Lei~~
	Puchun Zhou
}
\date{}
\providecommand{\classification}[1]
{
	\small	
	\textbf{Mathematics Subject Classification (2020):} #1
}
\begin{document}
	\maketitle
	\begin{abstract}
 
 Combinatorial Calabi flows are introduced by Ge in his Ph.D. thesis (Combinatorial methods and geometric equations, Peking University, Beijing, 2012), and have been studied extensively in Euclidean and hyperbolic background geometry. In this paper, we introduce the combinatorial Calabi flow in spherical background geometry for finding ideal circle patterns with prescribed total geodesic curvatures.
 We prove that the solution of combinatorial Calabi flow exists for all time and converges if and only if there exists an ideal circle pattern with prescribed total geodesic curvatures. We also show that if it converges, it will converge exponentially fast to the desired metric, which provides an effective algorithm to find certain ideal circle patterns. To our knowledge, it is the first combinatorial Calabi flow in spherical background geometry.
\\
		\classification{52C26, 51M10, 57M50
		}	
		
	\end{abstract}
	
\section{Introduction} \label{sec:1}
Finding a canonical metric with prescribed curvature on a given manifold is an important problem in differential geometry. In order to find canonical metrics, geometric flows are introduced, and play more and more important role in differential geometry. To understand the geometry of 3-manifolds, Hamilton introduced the Ricci flow \cite{hamilton1982three} which has been used to prove Poincar\'{e} conjecture. For seeking metrics with constant curvature, Calabi studied the variational problem of ``Calabi energy'' and introduced the Calabi flow in his work \cite{calabi1,calabi2}.

Geometric flows can be also used to study metrics on polyhedral manifolds. In 2-dimensional case, Thurston studied circle packing metrics in his famous notes \cite{thurston1976geometry}, which might have cone singularities on vertices of polyhedral surfaces. Inspired by Ricci flows on manifolds, Chow and Luo \cite{MR2015261} introduced combinatorial Ricci flows for finding polyhedral surfaces without singularities, given by
\[\ddt{r_v}=-(2\pi-\alpha_v)s(r_v),~~\forall v\in V,\]
where $s(x)=x~(\sin{x}~\text{or}~ \sinh{x})$ for Euclidean (spherical or hyperbolic) background geometry. $\alpha_v$ is the cone angle of the singularity at $v$, and $r_v$ is the radius of the circle centered at $v$. $2\pi-\alpha_v$ is called the discrete Gaussian curvature at $v$ in the literature.
They proved the long time existence and the convergence of the flow under some combinatorial conditions in Euclidean and hyperbolic background geometry. For other applications of combinatorial Ricci flows, we refer the readers to \cite{ge2021combinatorial,feng2022combinatorial}.

Moreover, Ge in his thesis \cite{ge_phd} first introduced the combinatorial Calabi flow for Thurston's circle packing metrics. Ge \cite{ge_phd,Ge2017Combinatorial}, Ge and Hua \cite{gehua2018combinatorial}, and Ge and Xu \cite{ge20162} proved that combinatorial Calabi flows exist for all time and converge exponentially fast to Thurston’s circle packing on surfaces if and only if there exists a circle packing metric of constant (zero resp.) discrete Gaussian curvatures in Euclidean (hyperbolic resp.) background geometry. 
 After that, so-called combinatorial p-th Calabi flows are also studied in \cite{lin2019combinatorial,feng2020combinatorial}, which generalize the work in \cite{Ge2017Combinatorial}.

Those work mentioned above mainly study combinatorial Calabi flows on polyhedral surfaces in Euclidean and hyperbolic background geometry.
However, in spherical background geometry, there is no result of combinatorial Calabi flows as far as we know. This is because the variational principle of discrete Gaussian curvatures in spherical background geometry is much more complicated than Euclidean and hyperbolic cases. Nevertheless, recently Nie \cite{nie2023circle} introduced a new variational principle for the total geodesic curvatures of ideal circle patterns in spherical background geometry. He proved the existence and uniqueness of ideal circle patterns with prescribed total geodesic curvatures. 

Inspired by his work, we introduce combinatorial Calabi flows in spherical background geometry to find ideal circle patterns with prescribed
total geodesic curvatures. Moreover, we prove that the combinatorial Calabi flow exists
for all time and it converges if and only if the prescribed total geodesic curvatures satisfy a combinatorial condition.

    \section{Preliminaries}\label{sec:2}
    \subsection{Ideal circle patterns in spherical background geometry}
Circle patterns are useful tools for studying the geometry and topology of 3-manifolds, which were rediscovered by Thurston in \cite{thurston1976geometry}. The theorem of existence and uniqueness of circle patterns on spheres was proved by Andreev in \cite{andreev1970convex1} and \cite{andreev1970convex2}, which is called the Koebe-Andreev-Thurston theorem. After that, Colin de Verdi\`{e}res proposed a variational principle which links circle patterns to minimizers of certain potential functions, see \cite{MR1106755}.

In particular, we consider an ideal convex polyhedron $P$ in 3-dimensional hyperbolic
space $\mathbb{H}^3$, i.e. the convex polyhedron whose vertices locate at infinity. In the Poincar\'{e}
ball model  $\mathbb{B}^3$ of  $\mathbb{H}^3$, each face of $P$ corresponds to a half sphere which intersects with $\partial  \mathbb{B}^3$ vertically at a spherical circle. Those circles form a circle pattern which is called ideal circle pattern. Two faces of $P$ with a common edge $e$ and dihedral angle $\Phi(e)$ correspond to two circles on $\partial \mathbb{B}^3$ with intersection angles $\Phi(e)$. So it is a natural problem to find ideal circle patterns with prescribed combinatorial type on $\partial\mathbb{B}^3$ with intersection angles $\Phi\in(0,\pi)^E$. This problem has been already resolved by Rivin in \cite{MR1193599}. He actually characterized ideal polyhedra in hyperbolic 3-space. 

Ideal circle patterns in Euclidean and hyperbolic background geometry had been systematically studied by Ge, Hua and Zhou \cite{ge2021combinatorial}. In their work, they extended Thurston's circle packing theory to ideal circle patterns and proved the existence and rigidity of ideal circle patterns with the help of combinatorial Ricci flows. Moreover, they gave a new proof of Rivin's elegant theorem \cite{MR1370757} with flow approach.
It is worth noting that Bobenko and Springborn \cite{MR2022715} and Guo \cite{guo2007note} also got some related results.


In our work, we consider the ideal circle patterns on surfaces with spherical conical metrics, which will be defined in this section. It is a generalization of ideal circle patterns obtained from ideal polyhedra in $\mathbb{H}^3$. 

We first define a special type of graph embedding in closed surfaces. Let $\Sigma$ be a closed surface and $G=(V,E)$ be a finite graph without loop. Let $\eta:V\cup E\rightarrow\Sigma$ be a graph embedding. A face is a connected component of $\Sigma\backslash\eta(V\cup E).$ By $F$ we denote the set of faces. We will not distinguish $\eta(V\cup E)$ with $V\cup E.$
We call an embedding $\eta$ a \textit{closed 2-cell embedding} if the following hold, see \cite{barnette1987generating}.
\begin{enumerate}
    \item The closure of every face is homeomorphic to a closed disk.
    \item Any face is bounded by a simple closed curve consists of finite many edges. 
\end{enumerate}
Note that the definition above allows multiple edges and face whose boundary consists of 2 edges.

By $v < e$ ($e < f$ resp.) we mean that a vertex $v$ (an edge e resp.)
is incident to an edge $e$ (a face $f$ resp.). By $v\sim w$, we mean $e=\{v,w\}\in E$. For set $W\subset V,$ by $E(W)$ we denote the set of edges that have at least one end point within $W,$ i.e.
\[E(W) = \{e\in E: \exists v\in W, ~s.t.~~ v<e\}.\]
Let $\Phi=\{\Phi(e)\}_{e\in E} \in (0,\frac{\pi}{2}]^E$ be intersection angles on edges and $r=\{r_v\}_{v\in V}\in (0,\frac{\pi}{2})^V$ be radii defined on vertices. 
We will construct an ideal circle pattern in spherical background geometry with respect to the closed 2-cell embedding $\eta$, intersection angles $\Phi$ and radii $r$.

For each face $f$, we add an auxiliary vertex $v_f$ in the
interior of the face. We define the \textit{incidence graph} of $G$ in the following way.
\begin{defn}
    An \textit{incidence graph} $I(G)$ is a bipartite graph with the bipartition $\{V,V_F\}$. For $v\in V$ and $v_f\in V_F$, $v$ and $v_f$ are adjacent in $I(G)$ if and only if $v$ is on the boundary of the face $f$.
\end{defn}
The notion of incidence graph is used to demonstrate the relationship between vertices and faces; see e.g.\ \cite{coxeter1950self}.
Let $e=\{v,w\}\in E$. Assume that $f_1,f_2\in F$ are faces whose boundaries contain $e$. By $Q_e$ we denote the quadrilateral $vv_{f_1}wv_{f_2}$ in the incidence graph $I(G)$. 
Obviously, any edge in the original graph $G$ uniquely corresponds to a quadrilateral in the incident graph $I(G)$, as shown in the Figure \ref{circle_pattern}. 

\begin{figure}[H]
		\centering
		\includegraphics[width=3in]{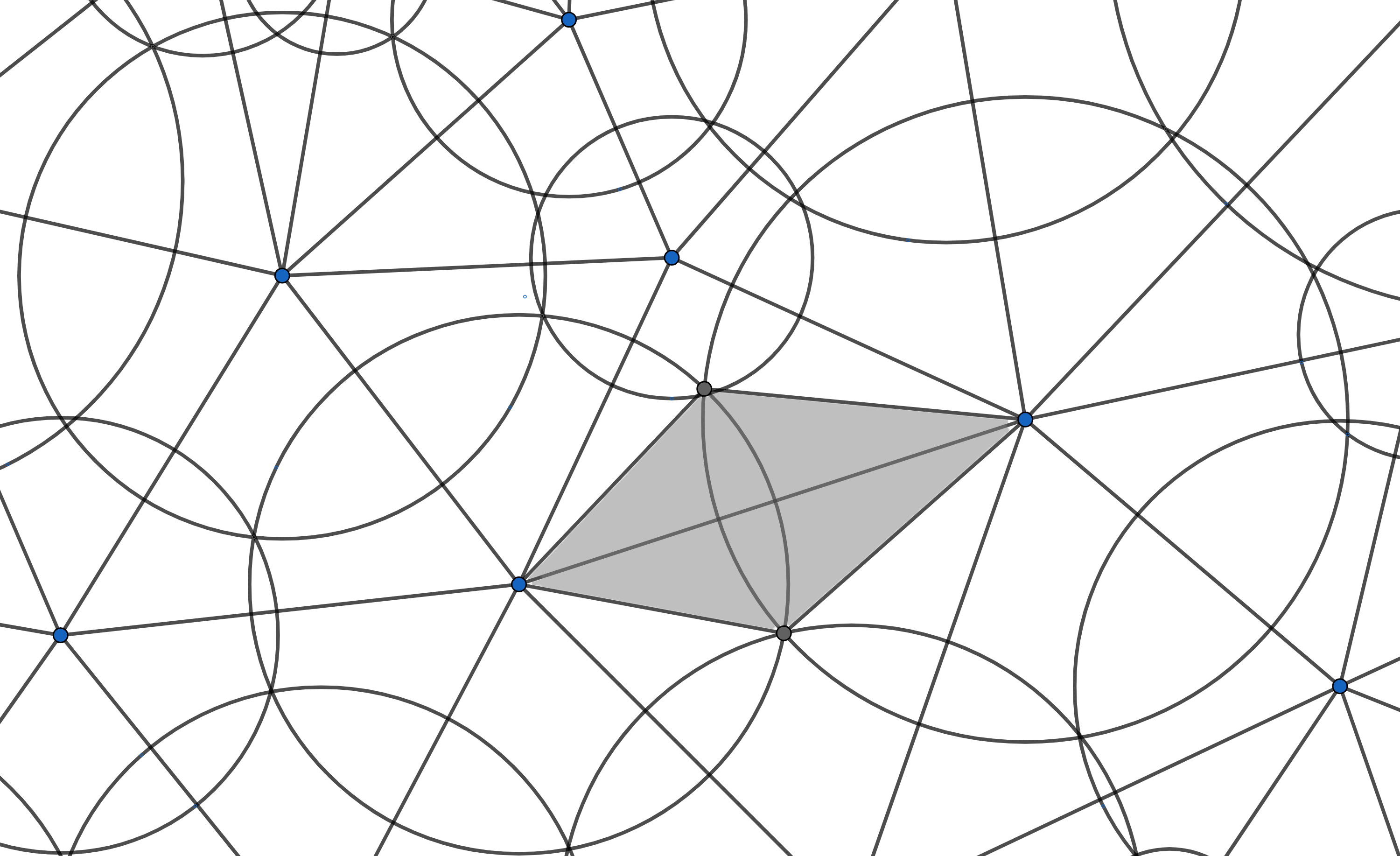}
		\caption{Local part of an ideal circle pattern.}
		\label{circle_pattern}
\end{figure}

Then we can construct a spherical quadrilateral $\tilde{Q}_e$ with 
\begin{align*}
\angle vv_{f_1}w=\angle vv_{f_2}w=\pi-\Phi(e),~
|vv_{f_1}|=|vv_{f_2}|=r_v,~ |wv_{f_1}|=|wv_{f_2}|=r_w,
\end{align*}
as in Figure \ref{quadrilateral}.

\begin{figure}[H]
		\centering
		\includegraphics[width=3in]{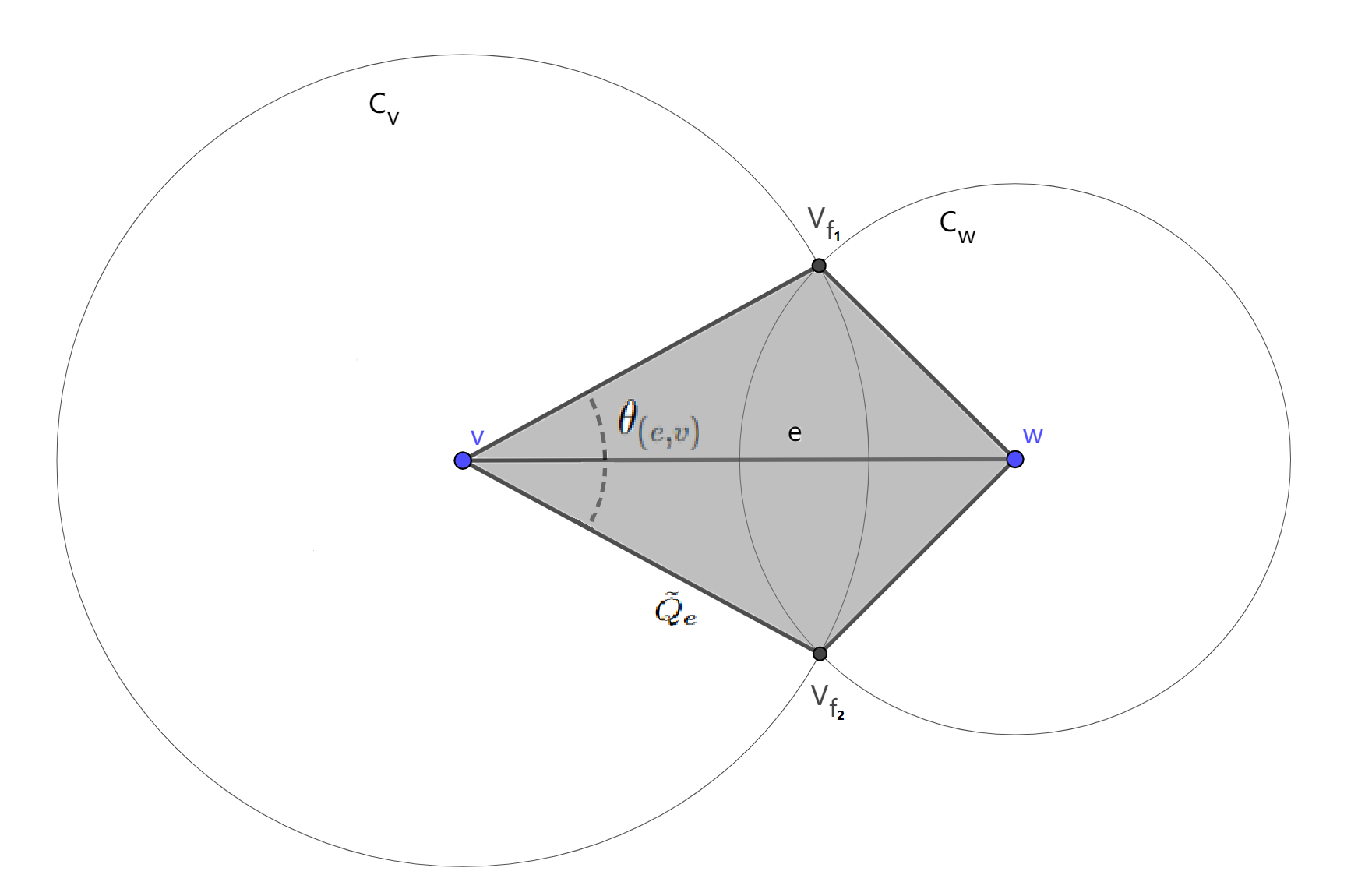}
		\caption{Spherical quadrilateral~$\tilde{Q}_e.$}
		\label{quadrilateral}
\end{figure}
By $\theta_{(e,v)}\in (0,2\pi)$ we denote $\angle v_{f_1}vv_{f_2}$ in $\Tilde{Q}_e.$ 
Gluing all the spherical quadrilaterals along the edges in $I(G)$, we obtain a metric $g=g(\eta,r,\Phi)$ on $\Sigma$ (for the gluing procedure, see \cite[Chapter 3]{burago2022course}). It is clear that $g$ is a metric of constant curvature outside $V\cup V_F.$ $V$ and $V_F$ are possible cone points in $\Sigma.$ For $v\in V$, the cone angle $\alpha_v$ is given by
\[\alpha_v = \sum_{e:v<e}\theta_{(e,v)}.\]
For $f\in F$, the cone angle at $v_f$ is given by
\[\alpha_{v_f}=\sum_{e:e<f}(\pi-\Phi(e)).\]
\begin{defn}
Given $\eta,r,\Phi$ and $g=g(\eta,r,\Phi)$ described as above,
the \textit{ideal circle pattern} $\mathcal{P}=\{C_v\}_{v\in V}$ with respect to $\eta,r$ and $\Phi$ is a collection of circles such that $C_v$ is centered at $v$ with radius $r_v$ in $(\Sigma,g)$ for each $v\in V.$
\end{defn}
\subsection{Problem of prescribed total geodesic curvatures}
For $v\in V$, by $k_v$ and $l_v$ we denote the geodesic curvature and circumference of $C_v,$ which is given by 
\[k_v = \cot r_v, ~l_v = \alpha_v\sin r_v.\]
Integrate the geodesic curvature along $C_v$, we get the \textit{total geodesic curvature} of $C_v$, which is denoted by $L_v,$ with
\[L_v = k_vl_v=\alpha_v\cos r_v.\]
Moreover, assume that $v<e$, by $L_{(e,v)}$ we denote the total geodesic curvature of $C_v\cap \tilde{Q}_e$, i.e.
\[L_{(e,v)}=\theta_{(e,v)}\cos{r_v}.\]
So for ideal circle patterns, we have
\begin{align}
    L_v = \sum_{e:v<e}L_{(e,v)}.
\end{align}
Fixed $\eta$ and $\Phi$, the total geodesic curvatures of an ideal circle pattern are uniquely determined by radii $r\in (0,\frac{\pi}{2})^V.$ So we can write total geodesic curvatures as functions of $r$, i.e.
\[L_v = L_v(r),~~~\forall v\in V.\]
Given $\Phi$ and prescribed total geodesic curvatures $\{\hat{L}_v\}_{v\in V}\in (0,\infty)^V$, an interesting problem raised by Nie in \cite{nie2023circle} is whether there are some radii $r$, such that
\[L_v(r) = \hat{L}_v,~~~\forall v\in V.\]
In his article, Nie found the necessary and sufficient condition of the existence of ideal circle patterns in spherical background geometry with prescribed total geodesic curvatures.
\begin{thm}[Nie]\label{nie}
Given $G = (V,E)$ and $\Phi\in (0,\frac{\pi}{2}]^E,$ for prescribed total geodesic curvatures
$\{\hat{L}_v\}_{v\in V},$ there exists radii $r$ such that 
\[ L_v(r)=\hat{L}_v, ~~~\forall v\in V, \]
if and only if 
$\{\hat{L}_v\}_{v\in V}$ satisfy the condition
\begin{align}\label{condition}
    \sum_{v\in W}\hat{L}_v<2\sum_{e\in E(W)}\Phi(e),~~~\forall W\subset V.
\end{align}
For seeking desired radii $r$, we introduced a combinatorial curvature flow  in our recent work, see \cite{ge2023combinatorial}, i.e. 
\begin{align}
\label{curvature_flow}
    \ddt{r_v}=\frac{L_v-\hat{L}_v}{2}\sin2r_v,~~~\forall v\in V.
\end{align}

\end{thm}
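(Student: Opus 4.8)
The statement is an ``if and only if,'' and I would split it into a necessity part, which I expect to be elementary and geometric, and a sufficiency part, which I expect to require the full variational/continuity machinery. Throughout I write $D_v$ for the spherical disk bounded by $C_v$, and I let $\Omega=\{\hat L\in(0,\infty)^V:\sum_{v\in W}\hat L_v<2\sum_{e\in E(W)}\Phi(e)\text{ for all }\emptyset\neq W\subseteq V\}$ denote the region cut out by \eqref{condition}; note that $\Omega$, being an intersection of open half-spaces, is open and convex.

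For necessity, the plan is to prove a single sharp per-edge estimate and then sum it. Fix an edge $e=\{v,w\}$ and work inside the spherical kite $\tilde{Q}_e$. Since $|vv_{f_i}|=r_v$ and $|wv_{f_i}|=r_w$, the auxiliary vertices $v_{f_1},v_{f_2}$ are exactly the two intersection points of $C_v$ and $C_w$, and the base angles $\pi-\Phi(e)$ force these circles to meet at angle $\Phi(e)$. I would let $A_v$ and $A_w$ be the circular sectors cut from $\tilde{Q}_e$ by the radii $vv_{f_1},vv_{f_2}$ and $wv_{f_1},wv_{f_2}$ respectively; by the standard decomposition these two sectors cover $\tilde{Q}_e$ and meet exactly along the lens $\mathcal L=D_v\cap D_w\cap\tilde{Q}_e$, which has positive area. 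Applying Gauss--Bonnet (with $K\equiv 1$) to $A_v$, whose two geodesic radii meet the arc of $C_v$ at right angles and whose arc carries geodesic curvature $\cot r_v$, yields $\operatorname{Area}(A_v)=\theta_{(e,v)}-L_{(e,v)}$, and likewise for $A_w$. Since the spherical excess of the kite is $\operatorname{Area}(\tilde{Q}_e)=\theta_{(e,v)}+\theta_{(e,w)}-2\Phi(e)$ and $\operatorname{Area}(\tilde{Q}_e)=\operatorname{Area}(A_v)+\operatorname{Area}(A_w)-\operatorname{Area}(\mathcal L)$, these identities combine to give
\[
L_{(e,v)}+L_{(e,w)}=2\Phi(e)-\operatorname{Area}(\mathcal L)<2\Phi(e).
\]
To finish, for nonempty $W$ I would expand $\sum_{v\in W}\hat L_v=\sum_{v\in W}\sum_{e:v<e}L_{(e,v)}$ and regroup by edges of $E(W)$: an edge with both endpoints in $W$ contributes $L_{(e,v)}+L_{(e,w)}<2\Phi(e)$, while an edge with a single endpoint in $W$ contributes one term $L_{(e,v)}<L_{(e,v)}+L_{(e,w)}<2\Phi(e)$. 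Summing over $E(W)$ gives precisely \eqref{condition}.

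For sufficiency I would set up a variational principle. First I would change variables to $u_v=\log\tan(r_v/2)$, turning the configuration space $(0,\frac{\pi}{2})^V$ into an open convex box, and show that the $1$-form $\omega=\sum_v L_v\,\mathrm{d}u_v$ is closed, i.e.\ $\partial L_v/\partial u_w=\partial L_w/\partial u_v$; this reduces to a symmetry of the derivatives of the angles $\theta_{(e,v)}$ within a single quadrilateral $\tilde{Q}_e$, which I would extract from the spherical cosine laws. Closedness lets me define the potential $\mathcal E(u)=\sum_v\hat L_v u_v-\int\omega$, whose gradient is $\hat L_v-L_v$, so that the sought radii are exactly the critical points of $\mathcal E$. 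I would then prove that the Jacobian $(\partial L_v/\partial u_w)$ is negative definite, which makes $\mathcal E$ strictly convex; this simultaneously gives rigidity (at most one configuration) and shows the curvature map $u\mapsto L(u)$ is an open injection, so that its image is an open subset of $\Omega$ by the necessity already established.

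The crux, and the step I expect to be the main obstacle, is upgrading this open image to all of $\Omega$. The plan is to show the curvature map is proper as a map into $\Omega$, equivalently that $\mathcal E$ is coercive precisely when $\hat L\in\Omega$: I must analyze the asymptotics of the curvatures $L_v$ as various subsets of radii degenerate to $0$ or to $\frac{\pi}{2}$, and verify that the threshold at which coercivity fails is governed exactly by the facet equalities $\sum_{v\in W}\hat L_v=2\sum_{e\in E(W)}\Phi(e)$. This degeneration analysis, linking the combinatorial inequality \eqref{condition} to properness, is the technical heart of the argument. Granting it, the image is open, closed in $\Omega$, and nonempty, so since $\Omega$ is connected it must equal $\Omega$, yielding existence. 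Alternatively, the same conclusion can be reached from the long-time behaviour of the combinatorial curvature flow \eqref{curvature_flow}, which is the negative gradient flow of $\mathcal E$ and therefore converges to a critical point exactly under the coercivity equivalent to \eqref{condition}.
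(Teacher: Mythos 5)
First, a point of comparison: the paper does not actually prove Theorem \ref{nie} --- it is quoted from Nie \cite{nie2023circle} (with the flow \eqref{curvature_flow} taken from \cite{ge2023combinatorial}); the paper only recalls the ingredients of Nie's variational principle in Section \ref{sec:5}. Measured against that machinery, your necessity argument is correct and is essentially the geometric interpretation Nie gives: Gauss--Bonnet on each sector yields $\operatorname{Area}(A_v)=\theta_{(e,v)}-L_{(e,v)}$, the two sectors cover $\tilde{Q}_e$ and overlap exactly in the lens $\mathcal{L}$, whence $L_{(e,v)}+L_{(e,w)}=2\Phi(e)-\operatorname{Area}(\mathcal{L})<2\Phi(e)$, and your regrouping over $E(W)$ gives \eqref{condition}. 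One thing you should make explicit is that the covering claim is where the hypothesis $\Phi(e)\in(0,\frac{\pi}{2}]$ enters: the kite angle at $v_{f_1}$ is $\pi-\Phi(e)\ge\frac{\pi}{2}$, which is exactly what keeps the arc of $C_v$ inside $\tilde{Q}_e$; for $\Phi(e)>\frac{\pi}{2}$ the sector leaks out of the kite and the area identity fails.

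The sufficiency half, however, has a genuine gap at its very first step: in your coordinates $u_v=\log\tan(r_v/2)$ the form $\omega=\sum_v L_v\,\mathrm{d}u_v$ is \emph{not} closed, so your potential $\mathcal{E}$ does not exist. The symmetry of mixed partials is special to the coordinates $K_v=\ln\cot r_v$ of \eqref{coordinate}: by \eqref{calculate1}, $\pp{L_{(e,v)}}{K_w}=-\frac{2\cos r_v\cos r_w\sin\frac{\theta_{(e,v)}}{2}\sin\frac{\theta_{(e,w)}}{2}}{\sin\Phi(e)}$, which is symmetric in $v,w$. Since $\mathrm{d}K_w/\mathrm{d}u_w=-1/\cos r_w$, passing to your variables gives $\pp{L_{(e,v)}}{u_w}=\frac{2\cos r_v\sin\frac{\theta_{(e,v)}}{2}\sin\frac{\theta_{(e,w)}}{2}}{\sin\Phi(e)}$ while $\pp{L_{(e,w)}}{u_v}=\frac{2\cos r_w\sin\frac{\theta_{(e,v)}}{2}\sin\frac{\theta_{(e,w)}}{2}}{\sin\Phi(e)}$, and these disagree whenever $r_v\ne r_w$; in fact, since the change of variables rescales the symmetric kernel by a factor depending only on one endpoint, only coordinates affine in $\ln\cot r$ preserve closedness. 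Identifying $\ln\cot r$ as the variable in which $\sum_v L_v\,\mathrm{d}K_v$ is closed is precisely the non-obvious insight of Nie's variational principle, recalled as Lemma \ref{vari}, so this cannot be waved through as ``a symmetry extracted from the spherical cosine laws.'' Beyond this, the properness/degeneration analysis that you yourself call the crux is only announced, not carried out; note that the paper's Corollary \ref{proper} obtains properness \emph{from} the existence of a solution, whereas sufficiency needs properness (or a compactness substitute) \emph{from} condition \eqref{condition} alone --- that is the hard direction, and nothing in this paper supplies it.
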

We proved the long time existence and convergence of the flow under the condition \eqref{condition}, which can be stated as follow.
\begin{thm}
    Given a 2-cell embedding of $G=(V,E)$ in a closed surface $\Sigma,$ let $\Phi\in (0,\frac{\pi}{2}]^E$ be intersection angles and $\{\hat{L}_v\}_{v\in V}$ be prescribed total geodesic curvatures. The flow \eqref{curvature_flow} exists for all time. Moreover, it converges if and only if the condition \eqref{condition} holds.\label{Thm 3}
\end{thm}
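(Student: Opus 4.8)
The plan is to pass to a global coordinate that turns \eqref{curvature_flow} into an autonomous negative gradient flow, read off long-time existence from a uniform bound on the right-hand side, and obtain the two convergence directions from the convexity supplied by Nie's variational principle. First I would introduce the change of variable $u_v=\log\tan r_v$, which is a diffeomorphism from $(0,\frac\pi2)^V$ onto $\mathbb{R}^V$ since $\frac{\mathrm{d}u_v}{\mathrm{d}r_v}=\frac{1}{\sin r_v\cos r_v}=\frac{2}{\sin 2r_v}>0$. Under this substitution \eqref{curvature_flow} becomes
\[\ddt{u_v}=L_v(r)-\hat L_v,\qquad \forall v\in V,\]
an autonomous system on all of $\mathbb{R}^V$. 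Writing $L_v=\sum_{e:v<e}L_{(e,v)}$ with $L_{(e,v)}=\theta_{(e,v)}\cos r_v<\theta_{(e,v)}<2\pi$, we see that $0<L_v(r)$ is bounded on $(0,\frac\pi2)^V$ by a constant depending only on $G$; hence $\bigl|\ddt{u_v}\bigr|$ is bounded, so $|u(t)|$ can grow at most linearly and no solution escapes in finite time. This yields long-time existence.

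For the variational structure I would invoke Nie's principle: the Jacobian $\bigl(\pp{L_v}{u_w}\bigr)_{v,w\in V}$ is symmetric and negative definite on $\mathbb{R}^V$. Consequently the $1$-form $\omega=\sum_{v}(\hat L_v-L_v)\,\mathrm{d}u_v$ is closed, hence exact on the simply connected $\mathbb{R}^V$, and I would let $\mathbb{E}$ be a primitive of $\omega$. Then $\mathbb{E}$ is $C^2$ with $\nabla\mathbb{E}=\hat L-L$ and $\mathrm{Hess}\,\mathbb{E}=-\bigl(\pp{L_v}{u_w}\bigr)\succ 0$, so $\mathbb{E}$ is strictly convex, and the transformed flow is exactly the negative gradient flow $\dot u=-\nabla\mathbb{E}(u)$.

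For the sufficiency direction, assume \eqref{condition} holds. By Theorem \ref{nie} there is $r^*\in(0,\frac\pi2)^V$, equivalently $u^*\in\mathbb{R}^V$, with $L(r^*)=\hat L$; by strict convexity $u^*$ is the unique critical point of $\mathbb{E}$ and its global minimum. To prove convergence I would use $\phi(t)=\frac12|u(t)-u^*|^2$: strict convexity gives $\langle u-u^*,\nabla\mathbb{E}(u)\rangle=\langle u-u^*,\nabla\mathbb{E}(u)-\nabla\mathbb{E}(u^*)\rangle>0$ for $u\neq u^*$, so $\dot\phi=-\langle u-u^*,\nabla\mathbb{E}(u)\rangle<0$ away from $u^*$. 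Thus $|u(t)-u^*|$ is non-increasing and the orbit stays in the compact ball $\overline{B}(u^*,|u(0)-u^*|)$. If $\phi$ did not tend to $0$, the orbit would eventually remain in a compact annulus on which $-\dot\phi\geq\delta>0$, forcing $\phi(t)\to-\infty$, a contradiction; hence $u(t)\to u^*$ and $r(t)\to r^*$, so the curvature flow converges.

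For the necessity direction, suppose the flow converges to some $r^\infty\in(0,\frac\pi2)^V$, i.e. $u(t)\to u^\infty\in\mathbb{R}^V$. Since $L$ is continuous, $\ddt{u(t)}=L(u(t))-\hat L\to L(u^\infty)-\hat L$; if this limit were nonzero in some coordinate $v$, then $u_v(t)$ would be eventually monotone with speed bounded away from $0$ and would diverge, contradicting convergence. Therefore $L(u^\infty)=\hat L$, so $r^\infty$ realizes the prescribed total geodesic curvatures, and the necessity part of Theorem \ref{nie} forces \eqref{condition}. The step I expect to be the main obstacle is justifying the convexity input, namely the symmetry and negative definiteness of $\bigl(\pp{L_v}{u_w}\bigr)$, since this is where the spherical geometry of the quadrilaterals $\tilde Q_e$ genuinely enters and is harder than in the Euclidean and hyperbolic cases; once this variational principle is in hand, long-time existence and both convergence directions follow from the gradient-flow formalism above.
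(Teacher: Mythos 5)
Your proposal is correct, but note that the paper itself does not prove Theorem \ref{Thm 3} in-text: it quotes the result from the authors' earlier work \cite{ge2023combinatorial}, so the natural in-paper comparison is the proof of Theorem \ref{main} in Section \ref{sec:6}, which handles the Calabi flow with the same variational framework. Your setup coincides with that framework: your coordinate $u_v=\log\tan r_v$ is exactly $-K_v$, your potential $\mathbb{E}$ is the paper's $\mathcal{E}$, and in both cases \eqref{curvature_flow} becomes the negative gradient flow of a strictly convex function whose convexity rests on Lemma \ref{vari} (Nie's variational principle) together with strict diagonal dominance --- the input you correctly flag as the real geometric content, and which you, like the paper, take as given. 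Where you genuinely diverge is in extracting convergence. For sufficiency, the paper's machinery runs through properness of the potential (Corollary \ref{proper}): the potential decays along the flow, properness traps the orbit in a compact set, and a uniform lower bound on the eigenvalues of $J^T$ gives exponential decay of the Calabi energy, hence exponential convergence. You instead use the Lyapunov function $\phi=\frac{1}{2}|u-u^*|^2$, whose strict decay follows from monotonicity of the gradient of a strictly convex function; this is more elementary --- compactness comes for free from the shrinking ball, and no properness or spectral bounds are needed --- but it yields only convergence, not the exponential rate that the paper's method provides. For necessity, the paper's analogous argument needs the mean value theorem plus uniform eigenvalue bounds to pass from convergence to $L\to\hat{L}$, because the Calabi flow's right-hand side carries the factor $J^T$; your direct argument --- the velocity of a convergent solution of an autonomous ODE must tend to zero, so $L(u^\infty)=\hat{L}$ --- is simpler and suffices here precisely because the curvature flow's right-hand side is $L-\hat{L}$ itself. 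Both routes then conclude via the necessity part of Theorem \ref{nie}. One point you should make explicit: your necessity argument reads \enquote{converges} as convergence to an interior point $r^\infty\in(0,\frac{\pi}{2})^V$, i.e.\ $u^\infty\in\mathbb{R}^V$; this matches the paper's usage (convergence of $K(t)$ in $\mathbb{R}^n$), but since $(0,\frac{\pi}{2})^V$ is bounded, convergence of $r(t)$ to a boundary point is a priori possible and is excluded only by this convention.
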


\subsection{Combinatorial Calabi flows with prescribed total geodesic curvatures}

Now we introduce combinatorial Calabi flows for ideal circle patterns in spherical background geometry. For smooth surfaces, we know the smooth Calabi flow is defined as
\[\frac{\partial g}{\partial t}=\Delta R g,\]
where $R$ is the Gaussian curvature, $\Delta$ is the Laplace-Beltrami operator.

For combinatorial Calabi flows in spherical background geometry, we have to define the discrete Laplace operator, which is an analogue of the Laplace-Beltrami operator on smooth surfaces.

For $r=\{r_v\}_{v\in V}$, we suppose that $|V|=n$ if there is no specific explanation. All vertices, marked by $v_1,...,v_n$, are ordered one by one and we often write
$i$ instead of $v_i$ if there is no confusion. We set
\begin{align}
K_i=\ln \cot r_i,\label{coordinate}
\end{align}
where $i=1,2,...,n$. The coordinate transformation maps $r\in (0,\frac{\pi}{2})^n
 $ to $K\in\mathbb{R}^n$ correspondingly. We denote this coordinate
transformation as a map
\[\ln\cot : (0,\frac{\pi}{2})^n \rightarrow \mathbb{R}^n, r=(r_1,...r_n) \mapsto K=(K_1,...,K_n)=\ln \cot r.\]

We first consider the Calabi energy with respect to total geodesic curvatures of ideal circle patterns in spherical background geometry, i.e.
	 \begin{align}
	 	\mathcal{C}(r) = \frac{1}{2}\|L\|^2\label{calabi},
	 \end{align}
 	 where 	 \[L = (L_1,...,L_{n})^T\in(0,\infty)^{n}~\] are total geodesic curvatures of the ideal circle pattern.
 	 
 	 For prescribed total geodesic curvatures 
 	 $$\hat{L}=(\hat{L}_1,...,\hat{L}_{n})^T\in(0,\infty)^{n},$$ we also have the Calabi energy, i.e.
 	 \begin{align}
 	 	\bar{\mathcal{C}}(r)=\frac{1}{2}\|L-\hat{L}\|^2.\label{precalabi}
 	 \end{align}
 	   
For $L=(L_1,...,L_n)^T$, $L$ is a function of the variable $K$. Then the negative gradient flow of $\mathcal{C}(K)=\frac{1}{2}\|L\|^2$ is defined as
 	 \begin{align}
 	 	\ddt{K}=-\begin{pmatrix}
 	 		\frac{\partial L_1}{\partial K_1}&\cdots&\frac{\partial L_{n}}{\partial K_{1}}\\
 	 		\vdots&\ddots&\vdots\\
 	 		\frac{\partial L_1}{\partial K_{n}}&\cdots&\frac{\partial L_{n}}{\partial K_{n}}\\
 	 	\end{pmatrix}L=-J^T L,\label{calabi_flow}
 	 \end{align}
where $J$ is the Jacobi matrix, i.e.

\[J = \frac{\partial(L_1,...,L_{n})}{\partial(K_1,...,K_{n})}=\begin{pmatrix}
 	 		\frac{\partial L_1}{\partial K_1}&\cdots&\frac{\partial L_{1}}{\partial K_{n}}\\
 	 		\vdots&\ddots&\vdots\\
 	 		\frac{\partial L_n}{\partial K_{1}}&\cdots&\frac{\partial L_{n}}{\partial K_{n}}\\
\end{pmatrix}.\]
We can prove that $J$ is symmetric and strictly diagonally dominant in section \ref{sec:5}.

So we can rewrite the flow \eqref{calabi_flow} as
\begin{align}
	\ddt{K}=\Delta L=-J^TL.\label{flow}
\end{align}
 
$\Delta$ is a negative definite operator and is similar to the classical discrete Laplace operator, see \cite[Chapter 1]{chung1997spectral}.
So we have the combinatorial Calabi flow for the prescribed total geodesic curvatures of ideal circle patterns in spherical background geometry as follow.
\begin{defn}
For given prescribed total geodesic curvature $\hat{L} \in(0,\infty)^V,$ the \textit{combinatorial Calabi flow} in spherical background geometry is
\begin{align}
\ddt{K}=\Delta(L-\hat{L})=-J^T(L-\hat{L}).\label{preflow}
\end{align}
\end{defn}

\subsection{Main results}\label{sec:3}
For the combinatorial Calabi flow with prescribed total geodesic curvatures, we have the main theorem as follow.
\begin{thm} \label{main}
    Given a closed 2-cell embedding $\eta$ and intersection angles $\Phi\in (0,\frac{\pi}{2}]^V,$ the combinatorial Calabi flow \eqref{preflow} with prescribed total geodesic curvatures $\{\hat{L}_v\}_{v\in V}$ exists for all time, and the following statements are equivalent:
    \begin{enumerate}[i.]
        \item The combinatorial Calabi flow \eqref{preflow} converges for any initial data.
        \item The prescribed curvature flow \eqref{curvature_flow} converges for any initial data.
        \item$\{\hat{L}_v\}_{v\in V}$ satisfies \eqref{condition}.
    \end{enumerate}
        Moreover, if the flow \eqref{preflow} converges, then it converges exponentially fast to an unique ideal circle pattern with $L_v=\hat{L}_v$ for each vertex $v.$ 
\end{thm}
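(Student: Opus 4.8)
The plan is to carry out the entire analysis in the coordinate $K=\ln\cot r$, which is a diffeomorphism of $(0,\tfrac{\pi}{2})^n$ onto $\mathbb{R}^n$, so that both flows live on all of $\mathbb{R}^n$ and their equilibria coincide exactly with solutions of $L=\hat L$. The organizing observation is that in this coordinate both flows are gradient flows of closely related functionals. Since $J$ is symmetric (so $J^T=J$) and $\Delta=-J$ is negative definite, i.e. $J\succ0$ throughout $\mathbb{R}^n$ (these being the properties proved in Section~\ref{sec:5}), the Calabi flow \eqref{preflow} is the negative gradient flow $\ddt{K}=-\nabla_K\bar{\mathcal C}$ of the energy \eqref{precalabi}, with $\nabla_K\bar{\mathcal C}=J(L-\hat L)$. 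Furthermore, symmetry of $J$ lets one introduce a strictly convex potential $W$ on $\mathbb{R}^n$ with $\nabla_K W=L-\hat L$ and $\mathrm{Hess}_K W=J\succ0$; a direct chain-rule computation shows that the curvature flow \eqref{curvature_flow} reads $\ddt{K}=-(L-\hat L)=-\nabla_K W$ in this coordinate. This twin gradient structure drives everything that follows.

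For long-time existence I would simply bound the right-hand side of \eqref{preflow}. Each $L_v=\alpha_v\cos r_v$ is bounded since $\cos r_v\in(0,1)$ and $\alpha_v=\sum_{e:v<e}\theta_{(e,v)}$ is a finite sum of angles in $(0,2\pi)$; combined with the boundedness of the entries of $J$ from Section~\ref{sec:5}, the field $-J(L-\hat L)$ is bounded on $\mathbb{R}^n$. Hence $|K(t)|$ grows at most linearly, no trajectory escapes $\mathbb{R}^n=\ln\cot\bigl((0,\tfrac{\pi}{2})^n\bigr)$ in finite time, and the solution exists for all $t\ge0$ whether or not \eqref{condition} holds.

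For the equivalences, $\text{(ii)}\Leftrightarrow\text{(iii)}$ is Theorem~\ref{Thm 3}, so it suffices to prove $\text{(iii)}\Leftrightarrow\text{(i)}$. For $\text{(iii)}\Rightarrow\text{(i)}$: under \eqref{condition}, Theorem~\ref{nie} gives a unique $K^\ast$ with $L(K^\ast)=\hat L$; since $J$ is invertible everywhere, $K^\ast$ is the unique critical point of both $W$ and $\bar{\mathcal C}$. Along \eqref{preflow} one computes $\ddt{W}=\nabla W\cdot\dot K=-(L-\hat L)^TJ(L-\hat L)\le0$, so $W$ is a Lyapunov function and the orbit stays in $\{W\le W(K(0))\}$; using that \eqref{condition} forces $W$ to be coercive, this sublevel set is compact. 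The orbit is then precompact, $\bar{\mathcal C}$ decreases along the flow, and $K^\ast$ is its only critical point, so LaSalle's invariance principle yields $K(t)\to K^\ast$. For the rate I would linearize \eqref{preflow} at $K^\ast$: because $L-\hat L$ vanishes there, $\mathrm{Hess}_K\bar{\mathcal C}(K^\ast)=J(K^\ast)^2\succ0$, so $K^\ast$ is exponentially stable and, the orbit already converging to it, the convergence is exponential. Conversely, for $\text{(i)}\Rightarrow\text{(iii)}$ I argue by contraposition: if \eqref{condition} fails, Theorem~\ref{nie} shows $L=\hat L$ has no solution, hence $\bar{\mathcal C}$ has no critical point; but any limit of a gradient-flow orbit is a critical point of $\bar{\mathcal C}$, so \eqref{preflow} cannot converge for any initial data and (i) fails.

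The main obstacle is the precompactness step, namely the coercivity of $W$ under condition \eqref{condition}. This is the genuine analytic input: strict convexity together with the existence of a minimizer does not by itself force bounded sublevel sets, so one must use the combinatorial inequality $\sum_{v\in W}\hat L_v<2\sum_{e\in E(W)}\Phi(e)$ to exclude directions of recession of $W$ — exactly the estimate underlying the convergence half of Theorem~\ref{Thm 3}. Once this is in hand, the remaining ingredients (the gradient reformulation, the Lyapunov identity, the LaSalle argument, and the linearized exponential rate via $\mathrm{Hess}_K\bar{\mathcal C}(K^\ast)=J(K^\ast)^2$) are routine.
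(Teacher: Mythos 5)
Your architecture largely mirrors the paper's: pass to the coordinate $K=\ln\cot r$, use the strictly convex potential $W$ (this is exactly the paper's $\mathcal{E}$ from Section \ref{sec:5}) as a Lyapunov function for \eqref{preflow}, deduce precompactness of the orbit from properness, and exploit a uniform spectral bound on $J$ over the compact orbit closure. Your two deviations are sound and, if anything, tidier: the implication \enquote{i$\Rightarrow$iii} via \enquote{a convergent gradient-flow orbit limits at a critical point of $\bar{\mathcal{C}}$, and since $J$ is invertible the critical points are exactly the solutions of $L=\hat{L}$} is a cleaner packaging of the paper's mean-value-theorem argument along integer times; and your LaSalle-plus-linearization treatment of the rate (via $\mathrm{Hess}_K\bar{\mathcal{C}}(K^*)=J(K^*)^2\succ0$) replaces the paper's direct differential inequality $\ddt{\bar{\mathcal{C}}}\le-2\hat{\lambda}^2\bar{\mathcal{C}}$, which has the minor advantage of giving exponential decay of the Calabi energy globally in time rather than only after the orbit enters a neighborhood of $K^*$. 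The long-time existence argument (bounded vector field, hence no finite-time escape) is the paper's as well.

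The one genuine defect is precisely the step you flag as \enquote{the main obstacle}: you leave the coercivity of $W$ unproven, and your stated reason for why it needs a separate hard estimate --- that \enquote{strict convexity together with the existence of a minimizer does not by itself force bounded sublevel sets} --- is false for finite-valued convex functions on $\mathbb{R}^n$. Suppose $f:\mathbb{R}^n\to\mathbb{R}$ is strictly convex with minimum $0$ attained at the origin, and some sublevel set $\{f\le c\}$ is unbounded; pick $x_k$ in it with $|x_k|\to\infty$ and $x_k/|x_k|\to u$. For fixed $s>0$ and $|x_k|\ge s$, convexity gives $f\bigl(s x_k/|x_k|\bigr)\le \tfrac{s}{|x_k|}f(x_k)\le \tfrac{sc}{|x_k|}\to 0$, so by continuity $f(su)\le 0$ for every $s\ge 0$, i.e.\ the whole ray consists of minimizers --- contradicting strict convexity. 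Hence strict convexity plus an attained minimum already forces all sublevel sets to be bounded, i.e.\ properness. This is exactly what the paper records as Corollary \ref{proper}, and it closes your gap using only ingredients you already invoke: condition \eqref{condition} yields, via Theorem \ref{nie}, a point $K^*$ with $L(K^*)=\hat{L}$, which is a critical point (hence the minimizer) of $W$, and properness follows at once. No recession-direction analysis based on the combinatorial inequality $\sum_{v\in W}\hat{L}_v<2\sum_{e\in E(W)}\Phi(e)$ is needed, nor any re-derivation of the estimates behind Theorem \ref{Thm 3}. With that one repair your proof is complete and is essentially the paper's.
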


\section{The potential function for total geodesic curvatures}\label{sec:5}
In this section, we briefly recall the variational principle of total geodesic curvatures of ideal circle patterns in spherical background geometry, which was observed by Nie in \cite{nie2023circle}.

\begin{lem}(Nie)\label{vari}
Set $\eta,\Phi$ as in section \ref{sec:2} and $e=\{v,w\}$, then we have
\[\pp{L_{(e,v)}}{K_w}=\pp{L_{(e,w)}}{K_v}.\]
Moreover, 
\[\pp{L_{(e,v)}}{K_w}<0,\pp{L_{(e,v)}}{K_v}>0,\pp{(L_{(e,v)}+L_{(e,w)})}{K_w}>0.\]
Hence, $\pp{(L_{(e,v)},L_{(e,w)})}{(K_v,K_w)}$ is strictly diagonally dominant.
\end{lem}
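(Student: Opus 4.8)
The plan is to collapse the whole statement onto a single spherical triangle and one elementary inequality. Since $\tilde Q_e=vv_{f_1}wv_{f_2}$ is a kite (equal legs at $v$ and at $w$ and equal apex angles $\pi-\Phi(e)$), the diagonal $vw$ is an axis of reflection symmetry and bisects the angles at $v$ and $w$. Cutting along it gives a spherical triangle $T$ with apex angle $\gamma:=\pi-\Phi(e)$ at $v_{f_1}$, legs $|vv_{f_1}|=r_v$, $|wv_{f_1}|=r_w$, base $d:=|vw|$, and base angles $\alpha:=\tfrac12\theta_{(e,v)}$ at $v$ and $\beta:=\tfrac12\theta_{(e,w)}$ at $w$. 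Hence $L_{(e,v)}=2\alpha\cos r_v$ and $L_{(e,w)}=2\beta\cos r_w$, and the range $\theta_{(e,v)}\in(0,2\pi)$ forces $\alpha,\beta\in(0,\pi)$. Finally I record the coordinate relation $\mathrm dr_v/\mathrm dK_v=-\sin r_v\cos r_v$ coming from $K_v=\ln\cot r_v$, so every $K$-derivative is $-\sin r\cos r$ times the corresponding $r$-derivative.

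Next I would assemble the differential identities of $T$, treating $\gamma$ as fixed and $r_v,r_w$ as the independent variables. Differentiating the spherical law of cosines $\cos d=\cos r_v\cos r_w-\sin r_v\sin r_w\cos\Phi$ and combining with the law of sines $\sin\alpha/\sin r_w=\sin\beta/\sin r_v=\sin\gamma/\sin d$ yields the clean relations $\pp{d}{r_w}=\cos\beta$, $\pp{\alpha}{r_w}=\dfrac{\sin\alpha\sin r_v}{\sin r_w\sin d}$ and $\pp{\alpha}{r_v}=-\sin\alpha\cot d$, together with the versions obtained by swapping $v\leftrightarrow w$ (hence $\alpha\leftrightarrow\beta$). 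This block holds the bulk of the trigonometric work, but each identity follows from one differentiation plus one substitution.

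With these in hand the four assertions fall out. For the symmetry, $\pp{L_{(e,v)}}{K_w}=2\cos r_v\,(-\sin r_w\cos r_w)\,\pp{\alpha}{r_w}=-\dfrac{2\cos r_v\cos r_w\,\sin r_v\sin\alpha}{\sin d}$, and the analogous computation gives $\pp{L_{(e,w)}}{K_v}=-\dfrac{2\cos r_v\cos r_w\,\sin r_w\sin\beta}{\sin d}$; these agree precisely because the law of sines gives $\sin r_v\sin\alpha=\sin r_w\sin\beta$, and the common value is negative, proving $\pp{L_{(e,v)}}{K_w}<0$. For the diagonal term I would substitute $\pp{\alpha}{r_v}=-\sin\alpha\cot d$ and then eliminate $\cos d$ via $\cos\alpha=(\cos r_w-\cos r_v\cos d)/(\sin r_v\sin d)$, turning the derivative into $\pp{L_{(e,v)}}{K_v}=2\sin r_v\cos r_v\big[\sin r_v(\alpha-\tfrac12\sin2\alpha)+\tfrac{\sin\alpha\cos r_w}{\sin d}\big]$, manifestly positive. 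For the third inequality the same eliminations collapse the two-term derivative to $\pp{(L_{(e,v)}+L_{(e,w)})}{K_w}=2\sin^2 r_w\cos r_w\,(\beta-\tfrac12\sin2\beta)$, again positive. Both positivity statements rest on the single elementary fact that $x-\tfrac12\sin2x>0$ for $x\in(0,\pi)$.

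Finally, strict diagonal dominance of $\pp{(L_{(e,v)},L_{(e,w)})}{(K_v,K_w)}$ is then immediate: the off-diagonal entries coincide and are negative, while the third inequality and its $v\leftrightarrow w$ swap say exactly that each diagonal entry exceeds $-\pp{L_{(e,v)}}{K_w}=\big|\pp{L_{(e,v)}}{K_w}\big|$. The one place genuine care is needed is the sign analysis when the base $d$ exceeds $\pi/2$ (possible since $\gamma=\pi-\Phi\ge\pi/2$), so that $\cot d<0$ and naive term-by-term bounds break down; the device that rescues every case is the law-of-cosines substitution above, which exposes a hidden cancellation and isolates the transcendental inequality $x>\tfrac12\sin2x$. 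I expect that step to be the main obstacle, with the remainder being bookkeeping.
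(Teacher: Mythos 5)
Your proposal is correct and takes essentially the same route as the paper: both arguments reduce to the half-quadrilateral spherical triangle $vv_{f_1}w$, compute the same partial derivatives by differentiating spherical trigonometric identities (you via variation formulas from the laws of cosines and sines, the paper by differentiating the four-part cotangent formula \eqref{cot} and invoking the sine law), and your closed-form expressions coincide with the paper's \eqref{calculate1} and \eqref{calculate2} after rewriting $\sin\Phi(e)/\sin d$ terms with the law of sines. In both proofs the positivity, and hence the strict diagonal dominance, rests on the same elementary inequality $\theta-\sin\theta>0$.
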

\begin{proof}
    This can be directly derived from the cotangent 4-part formula in spherical background geometry i.e.
\begin{align}
    \cot\frac{\theta_{(e,v)}}{2}=\frac{1}{\sin\Phi(e)}(\cot r_w\sin r_v+\cos r_v\cos\Phi(e)).\label{cot}
\end{align}
    By differentiating both sides of \eqref{cot} with respect to $K_v=\ln\cot r_v$ and $K_w=\ln\cot r_w$, and with the help of sine law, i.e.
\[\frac{\sin{\frac{\theta_{(e,v)}}{2}}}{\sin{r_w}}=\frac{\sin{\frac{\theta_{(e,w)}}{2}}}{\sin{r_v}},\]
    a tedious calculation gives 
    \begin{align}
        &\pp{L_{(e,v)}}{K_w}=-\frac{2\cos r_v\cos r_w\sin\frac{\theta_{(e,v)}}{2}\sin\frac{\theta_{(e,w)}}{2}}{\sin{\Phi(e)}}<0,\label{calculate1}\\
    &\pp{(L_{(e,v)}+L_{(e,w)})}{K_v}=\sin^2r_v\cos r_v(\theta_{(e,v)}-\sin\theta_{(e,v)})>0.\label{calculate2}
    \end{align}
    More details can be found in \cite{ge2023combinatorial}, here we omit the proof.
\end{proof}
\begin{rem}
    In \cite{nie2023circle}, Nie gives a geometric interpretation of this lemma.
\end{rem}

For an edge $e=\{v,w\}$, one can define an 1-form 
$$\omega_e=L_{(e,v)}\mathrm{d}K_v+L_{(e,w)}\mathrm{d}K_w.$$
By Lemma \ref{vari}, it is a closed form. So it is easy to see
\[\sum_{e\in E}\omega_e=\sum_{v\in V}\sum_{e:v<e}L_{(e,v)}\mathrm{d}K_v=\sum_{v\in V}L_v\mathrm{d}K_v\]
is a closed form. Since $\sum_i(L_i-\hat{L}_i)\mathrm{d}K_i$ is also a closed form, its integral is well-defined. Therefore, for prescribed total geodesic curvatures $\{\hat{L}_i\}_{i=1}^n,$ we can define a potential function, given by
\[\mathcal{E}(K_1,K_2,...,K_n)=
\int^{(K_1,...,K_n)}\sum_i(L_i-\hat{L}_i)\mathrm{d}\tau_i.\]
It is clear to see that 
\[\nabla_K\mathcal{E}=(L_1-\hat{L}_1,...,L_n-\hat{L}_n)^T.\]
Therefore, the Hessian of $\mathcal{E}$ equals to $J^T.$ 
Since
\begin{align*}
    J_{vv}&=\sum_{e:v<e}\pp{L_{(e,v)}}{K_v}>0,~~~\forall v\in V,\\
    J_{vw}&=\pp{L_v}{K_w}<0,~~~\forall w\sim v ,\\
    J_{vw}&=0,~~~\forall w\nsim v,
\end{align*}
$J$ is symmetric and strictly diagonally dominant by Lemma \ref{vari}. Consequently, $\mathcal{E}$ is strictly convex.
The potential function $\mathcal{E}$ is useful for the proof of our main theorem. Since the function $\mathcal{E}$ is defined in $\mathbb{R}^n$ and is strictly convex, it is proper if it attains a critical point. Therefore, we have the following corollary.
\begin{cor}\label{proper}
    For given prescribed total geodesic curvatures $\hat{L}$, if there exists $\overline{K}\in \mathbb{R}^n$ such that $L(\overline{K})=\hat{L}$, then the potential function $\mathcal{E}$ is proper, and it attains a global minimum at $\overline{K}$. 
\end{cor}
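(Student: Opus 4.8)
The plan is to exploit the two properties of $\mathcal{E}$ already established in this section: that $\nabla_K\mathcal{E}=(L_1-\hat L_1,\dots,L_n-\hat L_n)^T$ and that $\mathcal{E}$ is strictly convex on all of $\mathbb{R}^n$, its Hessian $J^T$ being positive definite. The hypothesis $L(\overline K)=\hat L$ says precisely that $\nabla_K\mathcal{E}(\overline K)=0$, so $\overline K$ is a critical point of $\mathcal{E}$. The statement then splits into two parts: that $\overline K$ is a global minimum, and that $\mathcal{E}$ is proper.

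First I would dispatch the global minimum claim, which is the standard consequence of strict convexity. For any $K\neq\overline K$, strict convexity together with the vanishing of the gradient gives $\mathcal{E}(K)>\mathcal{E}(\overline K)+\langle\nabla_K\mathcal{E}(\overline K),K-\overline K\rangle=\mathcal{E}(\overline K)$. Hence $\overline K$ is the unique global minimizer, and in particular $\mathcal{E}$ is bounded below by $\mathcal{E}(\overline K)$.

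For properness the goal is to show $\mathcal{E}(K)\to+\infty$ as $\|K\|\to\infty$; since $\mathcal{E}$ is continuous and bounded below, this is equivalent to preimages of compact sets being compact. The key step is to upgrade the pointwise strict inequality $\mathcal{E}(\overline K+u)>\mathcal{E}(\overline K)$, valid for each unit vector $u$, into a growth estimate uniform in direction. To do so I restrict $\mathcal{E}$ to the unit sphere centered at $\overline K$: by compactness and continuity, $m:=\min_{\|u\|=1}\mathcal{E}(\overline K+u)$ is attained, and $\delta:=m-\mathcal{E}(\overline K)>0$. Now for arbitrary $K$ with $R:=\|K-\overline K\|\geq 1$, set $u=(K-\overline K)/R$ and consider $g(t)=\mathcal{E}(\overline K+tu)$, which is convex as the composition of the convex $\mathcal{E}$ with an affine map. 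The monotonicity of the difference quotients of a convex function yields $g(R)\geq g(0)+R\bigl(g(1)-g(0)\bigr)\geq\mathcal{E}(\overline K)+R\delta$, so $\mathcal{E}(K)\geq\mathcal{E}(\overline K)+\delta\,\|K-\overline K\|\to+\infty$. This proves that $\mathcal{E}$ is proper.

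The only delicate point is the uniformity of the growth rate across directions: a naive argument would give merely $\mathcal{E}(\overline K+tu)\to\infty$ for each fixed $u$, which is insufficient for properness. The compactness of the unit sphere is exactly what converts the pointwise positivity $\mathcal{E}(\overline K+u)-\mathcal{E}(\overline K)>0$ into the uniform lower bound $\delta>0$, and I expect this to be the main, though essentially routine, obstacle. No further analysis of the Jacobian $J$ beyond the strict convexity of $\mathcal{E}$ is needed for this corollary.
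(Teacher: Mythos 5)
Your proof is correct and follows essentially the same route as the paper: the paper simply asserts, without further argument, that a strictly convex function defined on all of $\mathbb{R}^n$ is proper once it attains a critical point, and obtains the corollary from the already-established facts $\nabla_K\mathcal{E}=(L_1-\hat{L}_1,\dots,L_n-\hat{L}_n)^T$ and the positive definiteness of the Hessian $J^T$. Your writeup supplies exactly the details behind that assertion---in particular the compactness-of-the-unit-sphere step that turns pointwise radial growth into a bound uniform in direction, which is indeed the only non-routine point---so it is a complete version of what the paper leaves implicit.
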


\section{Long time existence}\label{sec:4}
	Now we prove the long time existence of the combinatorial Calabi flow \eqref{preflow}.
\begin{thm}[long time existence]
	Given an initial data $K_0\in \mathbb{R}^{n},$ the flow $K(t)$ of \eqref{preflow} exists for all time $t\in \mathbb{R}.$
\end{thm}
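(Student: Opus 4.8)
The plan is to regard \eqref{preflow} as an autonomous ODE on $\mathbb{R}^n$ and invoke the standard extension theorem. Since $K=\ln\cot r$ is a diffeomorphism from $(0,\frac{\pi}{2})^n$ onto $\mathbb{R}^n$, and each $\theta_{(e,v)}$ depends smoothly on $r$ through the cotangent formula \eqref{cot}, both $L(K)$ and the Jacobian $J(K)$ are $C^\infty$ on $\mathbb{R}^n$, so the vector field $f(K)=-J^T(K)\bigl(L(K)-\hat{L}\bigr)$ is smooth. By the Picard--Lindel\"of theorem there is a unique maximal solution $K(t)$ on an interval $(T_-,T_+)\ni 0$, and it suffices to exclude finite-time blow-up. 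The way I would do this is to show that $f$ is \emph{uniformly bounded} on all of $\mathbb{R}^n$.

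First I would bound $L$ itself: since $L_{(e,v)}=\theta_{(e,v)}\cos r_v$ with $\theta_{(e,v)}\in(0,2\pi)$ and $\cos r_v\in(0,1)$, we get $0<L_v<2\pi\deg(v)$, so $\|L-\hat{L}\|$ is bounded by a constant depending only on the graph and $\hat{L}$. Next I would bound the entries of $J$ using Lemma \ref{vari}. From \eqref{calculate1},
\[
\left|\pp{L_{(e,v)}}{K_w}\right|
=\frac{2\cos r_v\cos r_w\,\sin\frac{\theta_{(e,v)}}{2}\,\sin\frac{\theta_{(e,w)}}{2}}{\sin\Phi(e)}
\le \frac{2}{\min_{e\in E}\sin\Phi(e)},
\]
which is finite because $\Phi\in(0,\frac{\pi}{2}]^E$ on a finite edge set keeps $\sin\Phi(e)$ bounded away from $0$, while every factor in the numerator lies in $[0,1]$. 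From \eqref{calculate2} the quantity $\sin^2 r_v\cos r_v\,(\theta_{(e,v)}-\sin\theta_{(e,v)})$ is bounded since $\sin^2 r_v\cos r_v\le 1$ and $\theta_{(e,v)}-\sin\theta_{(e,v)}<2\pi+1$; together with the symmetry $\pp{L_{(e,w)}}{K_v}=\pp{L_{(e,v)}}{K_w}$ this also bounds $\pp{L_{(e,v)}}{K_v}$. As each $J_{vw}$ is a finite sum of such terms, all entries of $J$ are bounded by a constant depending only on $\eta$ and $\Phi$.

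Combining the two bounds gives $\|f(K)\|\le M$ for a constant $M$ uniform over $\mathbb{R}^n$. If $T_+<\infty$, integrating the flow yields $\|K(t)\|\le\|K_0\|+Mt$ for $t\in[0,T_+)$, so $K(t)$ remains in a bounded set as $t\to T_+$; this contradicts the maximal-interval criterion, which forces $\|K(t)\|\to\infty$. Hence $T_+=+\infty$, and running the same estimate in reverse time gives $T_-=-\infty$, so the flow exists for all $t\in\mathbb{R}$.

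The main obstacle is establishing the uniform control of $J$ over the \emph{entire} range $K\in\mathbb{R}^n$, that is, as $r_v\to 0$ or $r_v\to\frac{\pi}{2}$. A priori these degenerate limits could force the Jacobian to blow up; the essential point is that the explicit formulas of Lemma \ref{vari} isolate the only potentially singular factor as $1/\sin\Phi(e)$, which is harmless precisely because $\Phi$ is bounded away from $0$, while all $r$-dependent factors stay in $[0,1]$. Verifying that no hidden $r$-dependence escapes to infinity is the crux; once the vector field is known to be bounded, completeness of the flow is immediate.
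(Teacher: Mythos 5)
Your proposal is correct and takes essentially the same route as the paper: both establish local existence via Picard and then show the vector field $-J^T(L-\hat{L})$ is uniformly bounded on $\mathbb{R}^n$ by combining $\theta_{(e,v)}\in(0,2\pi)$ (to bound $\|L-\hat{L}\|$) with the explicit formulas \eqref{calculate1} and \eqref{calculate2} (to bound $J$), concluding by the standard ODE extension criterion. The only cosmetic difference is that the paper bounds $\|J^T\|_2$ via Gershgorin row sums while you bound the entries of $J$ directly, and you spell out the blow-up/escape argument that the paper leaves to ``the classical theorem of ordinary differential equations.''
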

\begin{proof}
	Note that $-J^T(L-\hat{L})$ is a smooth function of $K$. By Picard's theorem, we have the short time existence and uniqueness of the flow for any initial data.

 Now we turn to estimate the 2-norm of the matrix $J^T$. By $\lambda_{max}$ we denote the maximum eigenvalue of $J^T$. Due to the Gershgorin's theorem, we have
	\begin{align*}
        \|J^T\|_2=\lambda_{max}&\le \max_{v\in V}\sum_{w\in V}|J_{vw}|.
	\end{align*}
For each vertex $v$, by $d_v$ we denote the degree of $v$, i.e. the number of edges whose endpoints contain $v$. By \eqref{calculate1} and \eqref{calculate2}, for each vertex $v$, we have
\begin{align*}
&\sum_{w\in V}|J_{vw}| \le|J_{vv}-\sum_{w\sim v}J_{vw}|+2\sum_{w\sim v}|J_{vw}|,
\\
&\le 2\sin^2r_v\cos r_v(\theta_{(e,v)}-\sin\theta_{(e,v)})
+\sum_{e:v<e,e=\{v,w\}}\frac{2\cos r_v\cos r_w\sin\frac{\theta_{(e,v)}}{2}\sin\frac{\theta_{(e,w)}}{2}}{\sin{\Phi(e)}}
,\\
&\le4d_v\pi+ 2\sum_{e:v<e}\frac{1}{\sin\Phi(e)}.
\end{align*}
Since $\theta_{(e,v)}\in(0,2\pi)$ for any edge $e$, 
        we have 
        \[0<L_v=\sum_{e:v<e}\theta_{(e,v)}\cos r_v\le 2d_v\pi,~~\forall v\in V.\]
Therefore we have 
\begin{align}
\|\ddt{K}\|&\le\|J^T\|_2\|L-\hat{L}\|\\
&\le 4\sqrt{|V|}\max_v\{d_v\pi+\sum_{e:v<e}\frac{1}{\sin\Phi(e)}\}\max_v\{2d_v\pi+\hat{L}_v\}.\label{estimate}
\end{align}

	Therefore $\|\ddt{K}\|$ is bounded by a constant which only depends on the structure of the graph and intersection angles $\Phi.$ Then by the classical theorem of ordinary differential equations, the flow exists for all time $t\in\mathbb{R}.$
\end{proof}

\section{The proof of the main theorem}\label{sec:6}

In this section, we will prove Theorem \ref{main}. Due to Theorem \ref{Thm 3}, we only need to prove that i and iii are equivalent.
\begin{proof}
    For \enquote{i$\Rightarrow$iii}, suppose the combinatorial Calabi flow $K(t)$ converges to $K^*\in \mathbb{R}^n$ for some initial data, the sequence $\{K(l)\}_{l\in\mathbb{N}}$ also converges to $K^*.$ Since the Calabi energy $\bar{\mathcal{C}}(K(t)) = \frac{1}{2}\|L(K(t))-\hat{L}\|^2$ is a smooth function of $K$, it also converges. By the mean value theorem, we have 
    \begin{align*}
    \bar{\mathcal{C}}(K(l+1))-\bar{\mathcal{C}}(K(l))=\nabla \bar{\mathcal{C}}(K(\xi_l))^T\cdot \ddt{K}(\xi_l)=-\|\nabla \bar{\mathcal{C}}(K(\xi_l))\|^2,~~l\in \mathbb{N},
    \end{align*}
    for some $\xi_l\in [l,l+1].$ Since $\bar{\mathcal{C}}(K(t))$ converges, we have 
    \[\|\nabla \bar{\mathcal{C}}(K(\xi_l))\|^2=\|J^T(K(\xi_l))(L(K(\xi_l))-\hat{L})\|^2\rightarrow0.\] 
   By $\{\lambda_1(J^T(K(\xi_l)))\}_{l\in\mathbb{N}}$ we denote the minimum eigenvalues of $\{J^T(K(\xi_l))\}_{l\in \mathbb{N}}.$
Since $\{K(\xi_l)\}_{l\in \mathbb{N}}$ lies in a compact region of $\mathbb{R}^n$ and $J$ is positive definite, there exists $\lambda>0$ such that
\[
\lambda_1(J^T(K(\xi_l)))>\lambda,~~\forall l\in \mathbb{N}.
\]
So 
$$\|(L(K(\xi_l))-\hat{L})\|^2\le\frac{1}{\lambda^2}\|J^T(K(\xi_l))(L(K(\xi_l))-\hat{L})\|^2\rightarrow0,~~l\rightarrow\infty.$$
As a consequence, we have 
\[\|L(K^*)-\hat{L}\|=0.\]
Therefore by Theorem \ref{nie}, the prescribed total geodesic curvatures $\{\hat{L}_v\}_{v\in V}$ must satisfy \eqref{condition}.

Now we turn to prove \enquote{iii$\Rightarrow$i}.
We shall consider the potential function $\mathcal{E}$ mentioned in section \ref{sec:5}. If the condition \eqref{condition} holds, by Theorem \ref{nie} and Corollary \ref{proper}, the potential function $\mathcal{E}$ is proper. Now we compute
\begin{align*}
\ddt{\mathcal{E}(K(t))}&=\nabla\mathcal{E}^T\cdot \ddt{K},\\
&=-(L-\hat{L})^TJ^T(L-\hat{L}).
\end{align*}
Since $J^T$ is positive definite, $\ddt{\mathcal{E}(K(t))}\le0.$ Therefore $\mathcal{E}(K(t))$ is bounded from above. By Corollary \ref{proper}, $\mathcal{E}(K(t))$ is also bounded from below. Due to $\mathcal{E}$ is proper, $\{K(t):t\ge0\}$ must lie in a compact region of $\mathbb{R}^n.$ Let $\lambda_1(t)$ be the minimum eigenvalue of $J^T(K(t)).$  There exists $\hat{\lambda}>0$ such that 
\[
\lambda_1(t)>\hat{\lambda}, ~~~\forall t\ge0.
\]
Therefore,
\begin{align*}
    \ddt{\bar{\mathcal{C}}(K(t))}&=-\|J^T (L(K(t))-\hat{L})\|^2,\\
    &\le-\hat{\lambda}^2\|L(K(t))-\hat{L}\|^2,\\
    &=-2\hat{\lambda}^2\bar{\mathcal{C}}(K(t)).
\end{align*}
This indicates $0\le\bar{\mathcal{C}}(K(t))\le \bar{\mathcal{C}}(K(0))e^{-2\hat{\lambda}^2t}.$
Therefore $\|L(K(t))-\hat{L}\|^2$ converges exponentially fast to $0.$ And since $\ddt{K(t)}=-J^T(K(t))(L(K(t))-\hat{L})$ where $\|J^T(K(t))\|_2$ is uniformly bounded when $t\ge0$, $K(t)$ converges exponentially fast to some point as well. Finally, we finish the proof.
\end{proof}

\textbf{Acknowledgements.} The authors would like to thank Huabin Ge and Bobo Hua for helpful suggestions and useful comments on an earlier version of this paper. Lei is partially supported by NSFC, no.12122119. Zhou is partially supported by Shanghai Science and Technology Program [Project No. 22JC1400100].

\bibliographystyle{plain}
\bibliography{reference}

\noindent Ziping Lei, zplei@ruc.edu.cn\\[2pt]
\emph{School of Mathematics, Renmin University of China, Beijing, 100872, P.R. China} 
\\

\noindent Puchun Zhou, pczhou22@m.fudan.edu.cn\\[2pt]
\emph{School of Mathematical Sciences, Fudan University, Shanghai, 200433, P.R. China}
\end{document}